\documentclass[11pt, a4paper]{article}

\usepackage[a4paper, top=2.5cm, bottom=2.5cm, left=2.5cm, right=2.5cm]{geometry}

\usepackage[T1]{fontenc}      
\usepackage[utf8]{inputenc}     
\usepackage[english]{babel}     
\usepackage{lmodern}          

\usepackage{amsmath} 
\usepackage{amssymb} 
\usepackage{amsthm}  
\usepackage{xparse}  

\newcommand{\N}{\mathbb{N}} 
\newcommand{\Z}{\mathbb{Z}} 
\newcommand{\Qu}{\textbf{Q}}
\newcommand{\Leb}{\text{Leb}}

\NewDocumentCommand{\dv}{ O{} m m }{ \frac{d\IfNoValueTF{#1}{}{^{#1}}#2}{d#3\IfNoValueTF{#1}{}{^{#1}}} }
\NewDocumentCommand{\pdv}{ O{} m m }{ \frac{\partial\IfNoValueTF{#1}{}{^{#1}}#2}{\partial#3\IfNoValueTF{#1}{}{^{#1}}} }

\newcommand{\abs}[1]{\left\lvert #1 \right\rvert}

\newtheorem{theorem}{Theorem}

\newtheorem{corollary}{Corollary}

\newtheorem{lemma}{Lemma}[section]
\newtheorem{proposition}{Proposition}[section]

\theoremstyle{definition}

\theoremstyle{remark}
\newtheorem{remark}{Remark}[section]

\newtheorem*{question*}{Question}

\usepackage{booktabs} 
\usepackage{hyperref} 
\hypersetup{
    colorlinks=true,
    linkcolor=blue,
    filecolor=magenta,      
    urlcolor=cyan,
    pdftitle={Math Article},
    pdfpagemode=FullScreen,
}

\usepackage[backend=biber,style=alphabetic]{biblatex}
\title{Typical expansive measures for infinite-dimensional maps}
\author{Gabriel Lacerda\thanks{Corresponding author. Email: lacerda@im.ufrj.br} \\ Universidade Federal do Rio de Janeiro}
\date{}

\begin{document}

\maketitle

\begin{abstract}
A classical theorem by Mañé states that compact metric spaces admitting expansive homeomorphisms must be finite-dimensional. It is a natural question, raised by Morales and Sirvent, whether this dimensional constraint extends to the measure theoretic notion of expansiveness. In this paper, we answer this question in the negative. We prove that the shift map on the Hilbert cube is expansive with respect to the product Lebesgue measure. Furthermore, we establish a general sufficient condition: any homeomorphism admitting a full-support expansive measure is densely measure-expansive. As a consequence, we show that the set of expansive and topologically stable measures for the shift map contains a dense $G_\delta$ subset, revealing a rich measure-theoretic structure despite the infinite dimensionality of the phase space.

\vspace{0.5cm}
\noindent \textbf{Keywords:} Measure expansiveness; shift map; Hilbert cube; infinite dimension; topologically stable measures.
\end{abstract}

\section{Introduction}

A homeomorphism $f$ defined on a compact metric space $(X, d)$ is called \emph{expansive} if there exists $\delta > 0$ such that for every pair of distinct points $x, y \in X$, there exists $n \in \mathbb{Z}$ such that $d(f^n(x), f^n(y)) > \delta$. Equivalently, $f: X \to X$ is expansive if there exists $\delta > 0$ such that $\Gamma_\delta(x) = \{x\}$ for all $x \in X$, where
\begin{equation*}
    \Gamma_\delta(x) = \{y \in X : d(f^i(x), f^i(y)) \leq \delta, \, \forall i \in \mathbb{Z}\}.
\end{equation*}
In other words, a homeomorphism is expansive if the orbits of distinct points cannot remain within distance $\delta$ for all time. Moreover, we say that $\delta$ is an expansivity constant. This concept was introduced by W. R. Utz in \cite{utz_1950} under the name \emph{unstable homeomorphism} and has been the subject of extensive research in dynamical systems. A classical result by R. Ma\~{n}\'{e} \cite{mane_1979} states that if a compact metric space admits an expansive homeomorphism, it must have finite topological dimension.

Hence, a natural question is whether weaker versions of expansiveness also imply finite topological dimension. This question has already been studied for weaker topological notions of expansiveness. In fact, H. Kato proved in \cite{kato_1993} that if a compact metric space admits a continuum-wise expansive homeomorphism, then it must also have finite topological dimension.

A measure-theoretic notion of expansiveness was introduced by C. A. Morales in \cite{morales_2011} and revisited in a recent preprint \cite{morales_2025}. Intuitively, this notion requires that the measure of the set of points whose orbits remain within distance $\delta$ of a given point is zero. Formally, given a Borel probability measure $\mu$ on $X$, a homeomorphism $f$ is \emph{$\mu$-expansive} if there exists $\delta > 0$ such that $\mu(\Gamma_\delta(x)) = 0$ for all $x \in X$. In this context, we also say that $\mu$ is an \emph{expansive measure} with respect to $f$, or that $f$ is \emph{expansive with respect to $\mu$}, and that $\delta$ is an expansivity constant for $\mu$. Hence, it is not hard to see that every expansive homeomorphism is also expansive with respect to every non-atomic measure.

It is natural to ask whether $\mu$-expansiveness implies that the topological dimension of $X$ is finite. In this work, we answer this question in the negative.

Specifically, consider the Hilbert cube $\Qu = [0, 1]^\Z$, which is a compact metric space when endowed with the Tychonoff topology. A canonical dynamical system on $\Qu$ is the shift map $\sigma: \Qu \to \Qu$. This map is a homeomorphism defined by
\begin{equation*}
    \sigma(\xi) = (\ldots, \xi_0; \xi_1, \xi_2, \ldots)
\end{equation*}
for $\xi = (\ldots, \xi_{-1}; \xi_0, \xi_1, \ldots)$, where $\xi_i \in [0, 1]$. Let $\Leb$ denote the Lebesgue measure on $[0, 1]$. This measure induces a product measure on $\Qu$ defined by $\mathfrak{L} = \Leb^\Z$, which is invariant under $\sigma$. More generally, for a good measure $\nu$ on $[0, 1]$, that is, a measure that is homeomorphic to the Lebesgue measure, denote by $\mathfrak{N}$ the product measure $\mathfrak{N} = \nu^\Z$.

\begin{theorem}\label{theo:shift-map-measure-expansive}
    The shift map $\sigma: \emph{\Qu} \to \emph{\Qu}$ is expansive with respect to $\mathfrak{L}$, and also with respect to the product measure $\mathfrak{N}$. In particular, there exist homeomorphisms of infinite-dimensional compacta exhibiting expansive invariant measures with full support.
\end{theorem}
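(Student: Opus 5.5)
We fix a compatible metric on $\Qu$: $d(\xi,\eta)=\sum_{i\in\Z}2^{-\abs{i}}\abs{\xi_i-\eta_i}$. The key observation is that a $\delta$-orbit constraint pins down every coordinate. If $d(\sigma^n\xi,\sigma^n\eta)\le\delta$, the $i=0$ term gives $\abs{(\sigma^n\xi)_0-(\sigma^n\eta)_0}\le\delta$. Since $(\sigma^n\xi)_0=\xi_n$ (up to the sign convention of the shift), every $\eta\in\Gamma_\delta(\xi)$ satisfies $\abs{\xi_n-\eta_n}\le\delta$ for all $n\in\Z$. Hence
\begin{equation*}
\Gamma_\delta(\xi)\subseteq \prod_{n\in\Z}\bigl([\xi_n-\delta,\xi_n+\delta]\cap[0,1]\bigr).
\end{equation*}
The argument does not depend on the exact weights in the metric. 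Any compatible metric is uniformly equivalent to this one, and expansivity of a measure is preserved under changing the metric: the constant only changes. So it suffices to use a metric dominating $\abs{\xi_0-\eta_0}$.

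Next we estimate the measure of this box. For $\mathfrak{L}$, each factor has Lebesgue measure at most $2\delta$. Choosing $\delta<1/4$, the product measure of the box is at most $\prod_{\abs{n}\le N}(2\delta)=(2\delta)^{2N+1}$ for every $N$. This is because a box is contained in its cylinder over finitely many coordinates, whose measure is the finite product. Letting $N\to\infty$ gives $\mathfrak{L}(\Gamma_\delta(\xi))=0$ for every $\xi$, so $\delta$ is an expansivity constant for $\mathfrak{L}$.

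For a good measure $\nu$, let $h:[0,1]\to[0,1]$ be a homeomorphism with $h_*\Leb=\nu$; this is the step needing care. Uniform continuity alone is not enough; we need a uniform bound below $1$ on the $\nu$-mass of short intervals. Since $\nu$ is non-atomic, its distribution function $F(t)=\nu([0,t])$ is continuous on the compact interval $[0,1]$, hence uniformly continuous. So there is $\delta>0$ with $\nu(I)\le 1/2$ for every interval $I$ of length at most $2\delta$. The same cylinder argument then gives $\mathfrak{N}(\Gamma_\delta(\xi))\le (1/2)^{2N+1}\to 0$. Only non-atomicity of $\nu$ is used here; the full-support part of goodness enters only in the final clause.

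Finally we address the "in particular" clause. $\Qu$ is infinite-dimensional, since it contains $[0,1]^n$ for every $n$. The measures $\mathfrak{L}$ and $\mathfrak{N}$ are $\sigma$-invariant because they are product measures of identical factors. They have full support because every basic open set is a finite-coordinate cylinder with nonempty open factors, and $\Leb$ and $\nu$ give positive mass to nonempty open sets. The main obstacle is only bookkeeping: making sure the chosen metric genuinely dominates the $0$th coordinate, and that the uniform interval-mass bound holds for $\nu$.
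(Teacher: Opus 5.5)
Your proof is correct and follows the paper's argument: the zeroth-coordinate term of the metric traps $\Gamma_\delta(\xi)$ in the box $\prod_{n}([\xi_n-\delta,\xi_n+\delta]\cap[0,1])$, and that box has product measure zero because every factor has mass at most a fixed constant below $1$. Your distribution-function argument for $\mathfrak{N}$ is the paper's uniform-continuity argument in different words, since with $\nu(A)=\Leb(\varphi(A))$ and $\varphi=h^{-1}$ one has $F=\varphi$ or $F=1-\varphi$; so your caveat that ``uniform continuity alone is not enough'' holds for $h$ but not for $\varphi$, while your remark that only non-atomicity of $\nu$ is needed is a correct mild generalization.
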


The above result indicates that the weaker notion of expansiveness does not impose strict topological constraints on the dimension of the phase space, providing a negative answer to a question posed in \cite{morales_sirvent_2013}, namely: \emph{Is the support of an expansive measure of a homeomorphism of a compact metric space finite-dimensional?}

The validity of this theorem can also be derived from Lemma 2.7 in \cite{arbieto_morales_2013}. Although their proof assumes that the metric entropy is positive and finite, one can easily extend their arguments to the case of infinite metric entropy. Consequently, the proof of the first part of Theorem \ref{theo:shift-map-measure-expansive} can be reduced to showing that $\mathfrak{L}$ is an ergodic invariant measure for $\sigma$.

In \cite{lee_morales_shin_2018}, the authors introduced the concept of a densely measure-expansive homeomorphism $f$ on a compact metric space $X$, meaning that the set of expansive measures is dense in the space of Borel probability measures $\mathcal{M}(X)$. They showed that the set of measures expansive with respect to $f$ sharing a common expansivity constant is a $G_\delta$ subset of $\mathcal{M}(X)$ \cite[Lemma 3.1]{lee_morales_shin_2018}, where a $G_\delta$ set is a countable intersection of open sets. Moreover, they established a necessary condition for this property: for every $\varepsilon > 0$, there exists an expansive measure $\mu$ with invariant support such that $d_H(X, \operatorname{supp}(\mu)) \leq \varepsilon$, where $\operatorname{supp}(\mu)$ denotes the support of $\mu$ and $d_H$ is the Hausdorff distance. Recall that if $\operatorname{supp}(\mu) = X$, we say $\mu$ has full support. Here, we provide a sufficient condition.

\begin{theorem}\label{thm:full-support-expansive-measures-imply-densely-measure-expansive}
    If a homeomorphism $f: X \to X$ has an expansive measure with full support, then it is densely measure-expansive. More precisely, the set of expansive measures contains a dense $G_\delta$ subset of $\mathcal{M}(X)$.
\end{theorem}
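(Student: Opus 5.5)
Let $\mu$ be an expansive measure with full support and expansivity constant $\delta>0$. Let $\mathcal{E}_\delta$ denote the set of measures that are expansive with common expansivity constant $\delta$. By \cite[Lemma 3.1]{lee_morales_shin_2018}, $\mathcal{E}_\delta$ is a $G_\delta$ subset of $\mathcal{M}(X)$. Since $\mathcal{E}_\delta$ is contained in the set of expansive measures, it suffices to show that $\mathcal{E}_\delta$ is dense in $\mathcal{M}(X)$.

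The key observation is that absolute continuity preserves expansiveness with the same constant. If $\eta \ll \mu$, then $\mu(\Gamma_\delta(x))=0$ forces $\eta(\Gamma_\delta(x))=0$ for every $x\in X$, so $\eta\in\mathcal{E}_\delta$. It is therefore enough to prove that the measures absolutely continuous with respect to $\mu$ are weak$^*$ dense in $\mathcal{M}(X)$. Convex combinations of expansive measures with a common constant are also expansive, which gives some extra flexibility in the construction.

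For density, recall that finitely supported measures $\sum_{i=1}^k a_i\delta_{x_i}$ with $a_i\ge 0$ and $\sum_i a_i=1$ are dense in $\mathcal{M}(X)$. I will approximate each Dirac mass by a normalized restriction of $\mu$. Fix $\varepsilon>0$ and a point $x_i$. Because $\operatorname{supp}(\mu)=X$, the ball $B(x_i,\varepsilon)$ has positive $\mu$-measure, so
\[
\mu_i=\frac{\mu|_{B(x_i,\varepsilon)}}{\mu(B(x_i,\varepsilon))}
\]
is a well-defined probability measure with $\mu_i\ll\mu$. For any continuous $\varphi$ we have $\abs{\int\varphi\,d\mu_i-\varphi(x_i)}\le \omega_\varphi(\varepsilon)$, where $\omega_\varphi$ is the modulus of continuity of $\varphi$. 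Hence $\mu_i\to\delta_{x_i}$ as $\varepsilon\to0$. The measure $\sum_i a_i\mu_i$ is absolutely continuous with respect to $\mu$ and approximates $\sum_i a_i\delta_{x_i}$ arbitrarily well. Combining these steps, $\mathcal{E}_\delta$ is a dense $G_\delta$ set contained in the set of expansive measures, which proves the theorem.

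The main obstacle is the $G_\delta$ property, and I take it from \cite[Lemma 3.1]{lee_morales_shin_2018}. If I had to verify it myself, I would write $\mathcal{E}_\delta=\bigcap_n\{\nu:\sup_x\nu(\Gamma_\delta(x))<1/n\}$ and show each set in the intersection is open. The tool would be upper semicontinuity of $(\nu,x)\mapsto\nu(\Gamma_\delta(x))$, which holds because $\Gamma_\delta(x)$ is closed and depends upper semicontinuously on $x$, together with compactness of $X$. One subtlety needs care: the lemma should give a $G_\delta$ for a fixed constant, possibly a slightly smaller one. I would check that the density argument still works with whatever constant the lemma uses. It should, since $\Gamma_{\delta'}(x)\subset\Gamma_\delta(x)$ for $\delta'<\delta$, so $\mu$ is also expansive with any smaller constant.
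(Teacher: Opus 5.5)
Your proposal is correct and takes essentially the same route as the paper. Both arguments approximate by finitely supported measures, replace each Dirac mass by the normalized restriction of $\mu$ to a small neighborhood of the atom (which has positive measure by full support), keep the common expansivity constant $\delta$ through absolute continuity, and invoke Lemma 3.1 of Lee--Morales--Shin for the $G_\delta$ property. The only cosmetic difference is that you check weak$^*$ closeness with test functions and the modulus of continuity, whereas the paper gives an explicit Prokhorov-metric estimate.
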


In the literature, a property is considered \emph{typical} if it holds on a dense $G_\delta$ set. This means that a topologically large set has the property.

Therefore, as an immediate consequence of the above results, since the product Lebesgue measure is a measure with full support on the Hilbert cube, we obtain the following corollary.

\begin{corollary}\label{coro:dense_set_expansive_measures}
    The shift on the Hilbert cube is densely measure-expansive; that is, there exists a set $\mathcal{D}$ of expansive measures for the shift map $\sigma: \emph{\Qu} \to \emph{\Qu}$ which is a dense $G_\delta$ subset of $\mathcal{M}(\emph{\Qu})$ with respect to the weak$^*$ topology.
\end{corollary}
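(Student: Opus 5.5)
The corollary follows by combining Theorem \ref{theo:shift-map-measure-expansive} with Theorem \ref{thm:full-support-expansive-measures-imply-densely-measure-expansive}. The plan has two steps: first produce an expansive measure for $\sigma$ with full support on $\Qu$, then feed it into the general result.

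By Theorem \ref{theo:shift-map-measure-expansive}, $\sigma$ is expansive with respect to $\mathfrak{L}=\Leb^\Z$. It remains to check that $\operatorname{supp}(\mathfrak{L})=\Qu$.

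\emph{Step 1: full support of $\mathfrak{L}$.} The product topology on $\Qu$ has a base of cylinder sets
\begin{equation*}
    C=\{\xi\in\Qu : \xi_i\in U_i \text{ for } |i|\le n\},
\end{equation*}
where each $U_i$ is a nonempty open subset of $[0,1]$ and $n\in\N$. By definition of the product measure,
\begin{equation*}
    \mathfrak{L}(C)=\prod_{|i|\le n}\Leb(U_i)>0,
\end{equation*}
since a nonempty relatively open subset of $[0,1]$ contains an interval and so has positive Lebesgue measure. Every nonempty open subset of $\Qu$ contains such a cylinder, so it has positive $\mathfrak{L}$-measure. Hence $\mathfrak{L}$ has full support.

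\emph{Step 2: apply the general result.} Since $\sigma$ is a homeomorphism with a full-support expansive measure, Theorem \ref{thm:full-support-expansive-measures-imply-densely-measure-expansive} gives a dense $G_\delta$ subset $\mathcal{D}\subset\mathcal{M}(\Qu)$ consisting of $\sigma$-expansive measures, in the weak$^*$ topology. In particular, the expansive measures are dense, so $\sigma$ is densely measure-expansive.

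There is no real obstacle here. All the substantive work lies in the two theorems being invoked. The only point to watch is that the topology on $\mathcal{M}(\Qu)$ meant in Theorem \ref{thm:full-support-expansive-measures-imply-densely-measure-expansive} is the weak$^*$ topology, which is the standard topology on $\mathcal{M}(X)$ and the one used throughout. The same argument works verbatim with $\mathfrak{N}$ in place of $\mathfrak{L}$, because a good measure gives positive mass to nonempty open sets.
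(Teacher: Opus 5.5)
Your proposal is correct and follows the paper's own proof. The paper also reduces the corollary to Theorems \ref{theo:shift-map-measure-expansive} and \ref{thm:full-support-expansive-measures-imply-densely-measure-expansive}, and checks that $\mathfrak{L}$ has full support by writing the measure of a basic cylinder neighborhood as a finite product of positive Lebesgue measures.
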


This consequence is surprising because A. Artigue and D. Carrasco-Oliveira showed in \cite{artigue_carrasco-oliveira_2015} that a homeomorphism is expansive with respect to every non-atomic Borel probability measure if and only if it is \emph{countably-expansive}; that is, there exists $\delta > 0$ such that $\Gamma_\delta(x)$ is a countable set for all $x \in X$. Since every countably expansive homeomorphism is continuum-wise expansive, such homeomorphisms can only be defined on finite-dimensional phase spaces. Therefore, our result shows that we are not far from every non-atomic Borel probability measure being expansive, even when the phase space is infinite-dimensional.

We now turn to the discussion of the topological stability of measures. This concept is closely related to the classical notion of topological stability for homeomorphisms. The precise definition of topologically stable measures is given in \cite{lee_morales_2017}. Furthermore, the authors demonstrated that for a homeomorphism satisfying the \emph{pseudo-orbit tracing property} (POTP), all of its expansive measures are also topologically stable. Therefore, to prove the next result, it suffices to show that $\sigma: \Qu \to \Qu$ possesses the POTP.

\begin{corollary}\label{coro:dense_topologically_stable_measures}
    Every measure in $\mathcal{D}$ is also a topologically stable measure for $\sigma$.
\end{corollary}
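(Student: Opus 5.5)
The plan is to reduce Corollary \ref{coro:dense_topologically_stable_measures} to the result of \cite{lee_morales_2017} quoted above: for a homeomorphism with the pseudo-orbit tracing property, every expansive measure is topologically stable. Every element of $\mathcal{D}$ is expansive for $\sigma$ by Corollary \ref{coro:dense_set_expansive_measures}. So it suffices to prove that $\sigma:\Qu\to\Qu$ has the POTP. Before invoking the result, I will check its hypotheses in \cite{lee_morales_2017}, since they will be applied on an infinite-dimensional space. If the proof there uses only compactness and the POTP, it applies verbatim to $\Qu$.

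For the POTP, fix the compatible metric $d(\xi,\eta)=\sum_{i\in\Z}2^{-|i|}\abs{\xi_i-\eta_i}$, which has diameter at most $3$. Given $\varepsilon>0$, choose $N$ with $\sum_{|i|>N}2^{-|i|}<\varepsilon/2$. Then choose $\delta>0$ so small that $d(\xi,\eta)<\delta$ forces $\abs{\xi_i-\eta_i}<\varepsilon/(2(2N+1))$ for all $|i|\le N$; the choice $\delta=2^{-N}\varepsilon/(2(2N+1))$ works.

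Let $(x^k)_{k\in\Z}$ be a $\delta$-pseudo-orbit, meaning $d(\sigma(x^k),x^{k+1})<\delta$. Define the shadowing point coordinatewise by $y_k=(x^k)_0$, the zeroth coordinate of the $k$-th point. This is the standard tracing construction for full shifts, and here the alphabet $[0,1]$ is compact, so it carries over. The shadowing estimate then goes as follows.
\begin{itemize}
\item Fix $k$ and $|i|\le N$, and compare $(\sigma^k y)_i=y_{k+i}=(x^{k+i})_0$ with $(x^k)_i$.
\item Telescope along $j=k,\dots,k+i-1$ when $i>0$, or backwards when $i<0$.
\item Each step uses $\abs{(\sigma x^j)_m-(x^{j+1})_m}<\varepsilon/(2(2N+1))$ for an index $|m|\le N$.
\end{itemize}
Since $\sigma$ shifts indices by one, going from $(x^k)_i$ to $(x^{k+i})_0$ takes at most $|i|$ steps. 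Each step compares coordinates of the form $(x^{j})_{m+1}$ and $(x^{j+1})_m$ with $|m|<N$. Hence $\abs{(\sigma^ky)_i-(x^k)_i}<|i|\varepsilon/(2(2N+1))$.

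The main obstacle is turning this into a bound of order $\varepsilon$ after weighting by $2^{-|i|}$ and summing. The error grows linearly in $|i|$ while the weights decay geometrically, so the sum over $|i|\le N$ is bounded by a constant times $\varepsilon/(2N+1)$, roughly $\sum_i 2^{-|i|}|i|\,\varepsilon/(2(2N+1))\le 3\varepsilon/(2N+1)$. The tail $|i|>N$ contributes less than $\varepsilon/2$ times the maximal coordinate difference, which is at most $1$. Altogether $d(\sigma^k y,x^k)<C\varepsilon$ for a universal constant $C$, and rescaling $\varepsilon$ gives the POTP.

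Once the POTP is established, \cite{lee_morales_2017} shows that every measure in $\mathcal{D}$ is topologically stable for $\sigma$, which proves the corollary.
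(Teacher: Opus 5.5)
Your proposal is correct and follows the paper's route: prove the POTP for $\sigma$ with the shadowing point $y_k=(x^k)_0$, a telescoping estimate on $|i|\le N$ and a tail bound, then apply Theorem 3.1 of \cite{lee_morales_2017} to the expansive measures in $\mathcal{D}$. The only difference is cosmetic: your choice $\delta=2^{-N}\varepsilon/(2(2N+1))$ controls all coordinates $|i|\le N$ uniformly and treats negative $i$ cleanly, whereas the paper lets a per-coordinate bound of order $\delta 2^{|i|}$ be absorbed by the weights $2^{-|i|}$.
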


\section{Dynamical properties of the shift map}

For $\xi, \eta \in \Qu$, consider the metric defined by
\begin{equation}\label{eq:metric_hilbert_cube}
    \rho(\xi, \eta) = \sum_{i \in \mathbb{Z}} \frac{\abs{\xi_i - \eta_i}}{2^{\abs{i}}},
\end{equation}
which induces the Tychonoff topology on the Hilbert cube.

\begin{lemma}\label{lemma:caracterization-epsilon-close-orbit}
Given $\varepsilon > 0$ and $\xi \in \emph{\Qu}$, we have
    \begin{equation}
         \bigotimes_{i \in \mathbb{Z}}\;\; ([\xi_i - \varepsilon/3, \xi_i + \varepsilon/3] \cap [0, 1]) \subset \Gamma_\varepsilon(\xi) \subset \bigotimes_{i \in \mathbb{Z}}\;\; ([\xi_i - \varepsilon, \xi_i + \varepsilon] \cap [0, 1]).
    \end{equation}
\end{lemma}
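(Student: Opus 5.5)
The plan is to compute $\rho(\sigma^n\xi,\sigma^n\eta)$ explicitly in terms of coordinate differences and prove each inclusion separately. By definition of the shift, $(\sigma^n\xi)_i=\xi_{i+n}$, so
\begin{equation*}
\rho(\sigma^n\xi,\sigma^n\eta)=\sum_{i\in\Z}\frac{\abs{\xi_{i+n}-\eta_{i+n}}}{2^{\abs{i}}}.
\end{equation*}
Both inclusions follow from this formula together with the value $\sum_{i\in\Z}2^{-\abs{i}} = 1+2\sum_{i\ge1}2^{-i}=3$.

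For the left inclusion, take $\eta$ with $\abs{\eta_j-\xi_j}\le\varepsilon/3$ for every $j\in\Z$. Then for each $n\in\Z$ the formula gives
\begin{equation*}
\rho(\sigma^n\xi,\sigma^n\eta)\le\frac{\varepsilon}{3}\sum_{i\in\Z}2^{-\abs{i}}=\varepsilon .
\end{equation*}
Hence $\eta\in\Gamma_\varepsilon(\xi)$. The intersection with $[0,1]$ is automatic because $\eta\in\Qu$.

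For the right inclusion, take $\eta\in\Gamma_\varepsilon(\xi)$ and fix $j\in\Z$. Keep only the $i=0$ term in $\rho(\sigma^j\xi,\sigma^j\eta)$; every term is nonnegative, so
\begin{equation*}
\abs{\xi_j-\eta_j}\le\rho(\sigma^j\xi,\sigma^j\eta)\le\varepsilon .
\end{equation*}
Thus $\eta_j\in[\xi_j-\varepsilon,\xi_j+\varepsilon]\cap[0,1]$ for every $j$.

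There is no serious obstacle. The only points to check are the normalizing constant $3$, which is exactly why the factor $\varepsilon/3$ appears, and the index convention for $\sigma^n$. The direction of the shift does not matter, since $n$ ranges over all of $\Z$.
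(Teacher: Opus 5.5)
Your proof is correct and follows the paper's argument. For the left inclusion you use $\sum_{i\in\Z}2^{-\abs{i}}=3$ to get $\rho(\sigma^n\xi,\sigma^n\eta)\le\varepsilon$ for every $n$, and for the right inclusion you keep only the $i=0$ term of $\rho(\sigma^j\xi,\sigma^j\eta)$, exactly as the paper does.
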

\begin{proof}
    Consider $\eta \in \bigotimes_{i \in \mathbb{Z}}\;\; ([\xi_i - \varepsilon/3, \xi_i + \varepsilon/3] \cap [0, 1])$, then $\varepsilon/3 \geq \abs{\eta_i - \xi_i}$ for all $i \in \Z$. For any $k \in \Z$, 
    \begin{equation*}
        \sum_{i \in \mathbb{Z}} \frac{\abs{\eta_{k + i} - \xi_{k + i}}}{2^{\abs{i}}} \leq \frac{\varepsilon}{3} \sum_{i \in \mathbb{Z}} \frac{1}{2^{\abs{i}}} = 3 \cdot \frac{\varepsilon}{3} = \varepsilon.
    \end{equation*}
    Hence, for any $k \in \Z$,
    \begin{equation*}
        \rho(\sigma^k(\eta), \sigma^k(\xi)) = \sum_{i \in \mathbb{Z}} \frac{\abs{\eta_{k + i} - \xi_{k + i}}}{2^{\abs{i}}} \leq \varepsilon.
    \end{equation*}
    Thus, $\eta \in \Gamma_\varepsilon(\xi)$. Now, assume that $\eta \in \Gamma_\varepsilon(\xi)$. Then, for every $k \in \Z$, $\rho(\sigma^k(\eta), \sigma^k(\xi)) \leq \varepsilon$. In particular, for every $k \in \Z$, $\abs{\eta_{k + 0} - \xi_{k + 0}} \leq \varepsilon$. In other words, $\abs{\eta_k - \xi_k} \leq \varepsilon$. Therefore, $\eta \in \bigotimes_{i \in \mathbb{Z}}\;\; [\xi_i - \varepsilon, \xi_i + \varepsilon] \cap [0, 1]$.
\end{proof}

\begin{remark}
    The above construction can be generalized to the shift map $\sigma: X^\Z \to X^\Z$, where $(X, d)$ is a compact metric space. Indeed, for $\hat{x} = (\ldots, x_{-1}; x_0, x_1, \ldots)$ in $X^\Z$, modifying the metric defined in (\ref{eq:metric_hilbert_cube}) to account for the metric $d$ yields the same calculation, showing that
    \begin{equation*}
        \bigotimes_{i \in \mathbb{Z}} B_{\varepsilon/3}(x_i) \subset \Gamma_\varepsilon(\hat{x}) \subset \bigotimes_{i \in \mathbb{Z}} B_{\varepsilon}(x_i),
    \end{equation*}
    where $B_\varepsilon(x) = \{y \in X : d(x, y) < \varepsilon\}$. However, when $X$ is a finite set, for sufficiently small $\varepsilon > 0$, $\Gamma_\varepsilon(\hat{x})$ is simply the singleton set $\{\hat{x}\}$, recovering the classical notion of expansiveness.
\end{remark}

We now proceed to prove Theorem \ref{theo:shift-map-measure-expansive}, which establishes that the shift map $\sigma: \Qu \to \Qu$ is expansive with respect to the Lebesgue product measure $\mathfrak{L}$.

\begin{proof}[Proof of Theorem \emph{\ref{theo:shift-map-measure-expansive}}, part 1]
Let $\varepsilon > 0$ satisfy $2\varepsilon < 1$. Then, for every $\xi_i \in [0,1]$, the interval $[\xi_i - \varepsilon, \xi_i + \varepsilon] \cap [0, 1]$ is a proper subset of $[0, 1]$. Moreover, $\Leb([\xi_i - \varepsilon, \xi_i + \varepsilon] \cap [0, 1]) \leq 2\varepsilon$. Hence, for $\xi \in \Qu$, by Lemma \ref{lemma:caracterization-epsilon-close-orbit}, we have
\begin{equation*}
    \mathfrak{L}(\Gamma_\varepsilon(\xi)) \leq \prod\limits_{i \in \Z} \Leb([\xi_i - \varepsilon, \xi_i + \varepsilon] \cap [0, 1]) \leq \prod_{i \in \Z} (2\varepsilon) = 0.
\end{equation*}
This implies that any $\varepsilon > 0$ satisfying $2\varepsilon < 1$ is an expansivity constant for the product Lebesgue measure $\mathfrak{L}$.
\end{proof}

To conclude the proof of Theorem \ref{theo:shift-map-measure-expansive} concerning expansiveness with respect to the product of a good measure on the interval, we first define this concept. For context, a good measure is also known as an \emph{OU measure}, named in honor of J. Oxtoby and S. Ulam. Specifically, a probability measure $\nu$ on $[0,1]$ is called a \emph{good measure} if it is of the form $\nu = \Leb \circ \varphi$, where $\varphi$ is a homeomorphism of $[0, 1]$. That is, for any Borel set $A \subset [0, 1]$, we have $\nu(A) = \Leb(\varphi(A))$. We now proceed to show that $\mathfrak{N} = \nu^\Z$ is also an expansive measure for the shift map $\sigma: \Qu \to \Qu$.

\begin{proof}[Proof of Theorem \emph{\ref{theo:shift-map-measure-expansive}}, part 2]
Let $\varphi$ be a homeomorphism of the interval $[0, 1]$, and let $0 < \varepsilon < 1$. Because $\varphi$ is a continuous function on a compact set, it is uniformly continuous. Therefore, there exists a $\delta > 0$ such that for all $x, y \in [0, 1]$, if $\abs{x - y} \leq \delta$, then $\abs{\varphi(x) - \varphi(y)} < \varepsilon$. Thus, for any given $z \in [0, 1]$,
\begin{equation*}
    \Leb(\varphi([z - \delta/2, z + \delta/2] \cap [0, 1])) < \varepsilon.
\end{equation*}
In other words, $\nu\left(\left[z - \delta/2, z + \delta/2\right] \cap [0, 1]\right) < \varepsilon$. Consequently, for any $\xi \in \Qu$, Lemma \ref{lemma:caracterization-epsilon-close-orbit} implies that
\begin{equation*}
     \Gamma_{\delta/2}(\xi) \subset \bigotimes_{i \in \mathbb{Z}} \left([\xi_i - \delta/2, \xi_i + \delta/2] \cap [0, 1]\right).
\end{equation*}
Therefore, since $0 < \varepsilon < 1$ and $\mathfrak{N}$ is the product measure, we obtain
\begin{equation*}
    \mathfrak{N}( \Gamma_{\delta/2}(\xi)) \leq \prod_{i \in \mathbb{Z}} \nu\left([\xi_i - \delta/2, \xi_i + \delta/2] \cap [0, 1]\right) \leq \prod_{i \in \mathbb{Z}} \varepsilon = 0.
\end{equation*}
\end{proof}

\begin{remark}
    Observe that the above argument is also valid for the product measure $\mathfrak{N}_* = \bigotimes_{i \in \Z} \nu_i$ on $\Qu$, where each $\nu_i = \Leb \circ \varphi_i$ is a good measure and the associated set of homeomorphisms $\{\varphi_i\}$ forms an equicontinuous family. Hence, this shows the existence of a measure that is not necessarily invariant, but is also an expansive measure with respect to $\sigma$.
\end{remark}

To metrize the weak-$*$ topology on $\mathcal{M}(X)$, we consider the Prokhorov metric $\pi$. Given the Borel $\sigma$-algebra $\mathcal{B}_X$ on $X$, the distance between two measures $\mu, \nu \in \mathcal{M}(X)$ is defined as
\begin{equation}\label{eq:prohorov-metric-definition}
    \pi(\mu, \nu) = \inf \{\delta > 0 : \mu(A) \leq \nu(A_\delta) + \delta \text{ for all } A \in \mathcal{B}_X\},
\end{equation}
where $A_\delta = \bigcup_{x \in A} \{y \in X : d(x, y) < \delta\}$ \cite{Prokhorov_1956}. 

\begin{proof}[Proof of Theorem \ref{thm:full-support-expansive-measures-imply-densely-measure-expansive}]
    The proof of this result relies on the fact that, given a measure $\mu$ with full support, the set of non-atomic absolutely continuous measures with respect to $\mu$ is dense. We provide the explicit construction here. 
    
    For every $\varepsilon > 0$ and any measure $\lambda \in \mathcal{M}(X)$, there exists a measure $\beta$ with finite support such that $\pi(\lambda, \beta) < \varepsilon/2$. We can write $\beta = \sum_{i = 1}^N t_i \delta_{x_i}$, where $\delta_{x_i}$ is the Dirac measure at $x_i \in X$, $t_i > 0$, and $\sum_{i = 1}^N t_i = 1$. 
    
    We will construct a non-atomic measure $\nu$ such that $f$ is $\nu$-expansive and $\pi(\beta, \nu) < \varepsilon/2$. For each $x_i \in \operatorname{supp}(\beta)$, choose a small Borel set $F_i$ containing $x_i$ such that $\operatorname{diam}(F_i) < \varepsilon/2$ and $\mu(F_i) > 0$. This is possible because $\mu$ has full support. Next, for each $1 \leq i \leq N$, define the probability measure $\nu_i$ such that for any measurable set $A$,
    $$
        \nu_i(A) = \frac{\mu(A \cap F_i)}{\mu(F_i)}.
    $$
    We then define the measure $\nu$ as $\nu = \sum_{i = 1}^N t_i\nu_i$. 
    
    Since $f$ is $\mu$-expansive, there exists $\delta > 0$ such that $\mu(\Gamma_\delta(x)) = 0$ for every $x \in X$. Because $\nu$ is absolutely continuous with respect to $\mu$, it follows that $\nu(\Gamma_\delta(x)) = 0$, implying that $f$ is also $\nu$-expansive. Furthermore, since $\mu$ is non-atomic, $\nu$ is also a non-atomic probability measure. Finally, observe that $\nu$ has the same expansivity constant as $\mu$.
    
    To bound the Lévy–Prokhorov distance between $\beta$ and $\nu$, let $A \subseteq X$ be an arbitrary Borel set, and let $I \subseteq \{1, \ldots, N\}$ be the set of indices such that $x_j \in A$. Since $\operatorname{diam}(F_j) < \varepsilon/2$ and $x_j \in A$, it follows that $F_j \subseteq A_{\varepsilon/2}$ for all $j \in I$. Consequently, $\nu_j(A_{\varepsilon/2}) = 1$ for all $j \in I$. Thus, we have:
    $$
        \beta(A) = \sum_{j \in I} t_j = \sum_{j \in I} t_j\nu_j(A_{\varepsilon/2}) \leq \sum_{j = 1}^N t_j\nu_j(A_{\varepsilon/2}) = \nu(A_{\varepsilon/2}) \leq \nu(A_{\varepsilon/2}) + \frac{\varepsilon}{2}.
    $$
    Therefore, $\pi(\beta, \nu) \leq \varepsilon/2$. By the triangle inequality, $\pi(\lambda, \nu) < \varepsilon$. Finally, there exists a dense set of expansive measures for $f$ that all share the same expansivity constant; hence, by Lemma 3.1 in \cite{lee_morales_shin_2018}, the set of expansive measures contains a dense $G_\delta$ subset of $\mathcal{M}(X)$.
\end{proof}

The proof of Corollary \ref{coro:dense_set_expansive_measures} follows directly from Theorems \ref{theo:shift-map-measure-expansive} and \ref{thm:full-support-expansive-measures-imply-densely-measure-expansive}. That is, it suffices to show that the support of $\mathfrak{L}$ is the entire Hilbert cube $\Qu$.

\begin{proof}[Proof of Corollary \emph{\ref{coro:dense_set_expansive_measures}}]
    It suffices to show that every open neighborhood of a point $\xi \in \Qu$ has positive product Lebesgue measure. Because we are considering the natural topology on $\Qu$, that is, the Tychonoff topology, a basic open neighborhood of $\xi = (\ldots, \xi_{-1}; \xi_0, \xi_1, \ldots)$ is a set of the form $\bigotimes_{i \in \Z} V_i$, where $V_i$ is an open neighborhood of $\xi_i$ for $i$ in a finite subset $F \subset \Z$, and $V_i = [0, 1]$ for all $i \in \Z \setminus F$. Therefore,
\begin{equation*}
    \mathfrak{L}\left(\bigotimes_{i \in \Z} V_i\right) = \prod_{i \in \Z} \Leb(V_i) = \prod_{i \in F} \Leb(V_i) > 0. 
\end{equation*}
\end{proof}

To prove Corollary \ref{coro:dense_topologically_stable_measures}, it suffices to show that $\sigma: \Qu \to \Qu$ satisfies the pseudo-orbit tracing property. Recall that given a homeomorphism $f: X \to X$ and $\delta > 0$, a $\delta$-pseudo-orbit of $f$ is a sequence $(x_n)_{n \in \Z}$ in $X$ such that $d(f(x_n), x_{n + 1}) \leq \delta$ for all $n \in \Z$. Moreover, a $\delta$-pseudo-orbit of $f$ is $\delta$-shadowed if there exists a point $y \in X$ such that $d(f^n(y), x_n) \leq \delta$ for all $n \in \Z$. We say that $f$ satisfies the pseudo-orbit tracing property if for every $\varepsilon > 0$, there exists a $\delta > 0$ such that every $\delta$-pseudo-orbit of $f$ is $\varepsilon$-shadowed.

\begin{proposition}
    The shift map $\sigma: \emph{\Qu} \to \emph{\Qu}$ has the pseudo-orbit tracing property.
\end{proposition}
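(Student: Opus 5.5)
The plan is to shadow a pseudo-orbit by the point that copies, at each time $n$, the zeroth coordinate of the $n$-th element of the pseudo-orbit. Fix $\varepsilon > 0$ and let $(\xi^n)_{n \in \Z}$ be a $\delta$-pseudo-orbit of $\sigma$ with respect to the metric $\rho$ in (\ref{eq:metric_hilbert_cube}); $\delta$ will be chosen at the end in terms of $\varepsilon$. Define $\eta \in \Qu$ by $\eta_n = \xi^n_0$ for all $n \in \Z$. We then need to bound $\rho(\sigma^n(\eta), \xi^n)$ uniformly in $n$.

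The key step is a coordinatewise estimate. From $\rho(\sigma(\xi^n), \xi^{n+1}) \leq \delta$ and the weight $2^{-\abs{i}}$ we get $\abs{\xi^n_{i+1} - \xi^{n+1}_i} \leq 2^{\abs{i}}\delta$ for every $i \in \Z$. Now compare $(\sigma^n(\eta))_i = \xi^{n+i}_0$ with $\xi^n_i$. For $i > 0$ we telescope along the chain
\begin{equation*}
\xi^n_i,\ \xi^{n+1}_{i-1},\ \ldots,\ \xi^{n+i}_0,
\end{equation*}
which gives $\abs{\xi^{n+i}_0 - \xi^n_i} \leq \sum_{j=0}^{i-1} 2^{j}\delta \leq 2^{i}\delta$. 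The case $i < 0$ is handled the same way using the index shifted in the other direction, giving a bound of order $2^{\abs{i}}\delta$. The case $i = 0$ is exact.

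The main obstacle is that these bounds grow like $2^{\abs{i}}$, while the weight in $\rho$ is only $2^{-\abs{i}}$, so we cannot simply sum them. We fix this by truncation, using that every coordinate lies in $[0,1]$. Choose $M$ with $\sum_{\abs{i} > M} 2^{-\abs{i}} = 2^{1-M} < \varepsilon/2$. For $\abs{i} > M$, bound each coordinate difference by $1$. For $\abs{i} \leq M$, use the telescoping bound. This yields
\begin{equation*}
\rho(\sigma^n(\eta), \xi^n) \leq \sum_{\abs{i} \leq M} 2^{-\abs{i}} \cdot 2^{\abs{i}}\delta + \frac{\varepsilon}{2} \leq (2M+1)\delta + \frac{\varepsilon}{2}.
\end{equation*}
Taking $\delta < \varepsilon / (2(2M+1))$ makes this at most $\varepsilon$ for all $n \in \Z$. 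Hence every $\delta$-pseudo-orbit is $\varepsilon$-shadowed by $\eta$, which proves the proposition. Corollary \ref{coro:dense_topologically_stable_measures} then follows from the result of \cite{lee_morales_2017} quoted before it.
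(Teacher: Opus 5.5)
This is essentially the paper's proof: the same shadowing point $\eta_n=\xi^n_0$, the same telescoping of the coordinatewise bounds $\abs{\xi^k_{j+1}-\xi^{k+1}_j}\le 2^{\abs{j}}\delta$, and the same truncation at $\abs{i}\le M$.

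There is one constant slip, which the paper's proof shares. For $i=-m<0$, the chain $\xi^{n-m}_0,\xi^{n-m+1}_{-1},\ldots,\xi^{n}_{-m}$ uses the links with $j=-1,\ldots,-m$. The telescoped bound is therefore $\sum_{j=1}^{m}2^{j}\delta=2(2^{m}-1)\delta$, not $2^{\abs{i}}\delta$. The truncated sum is then only bounded by $3M\delta$ rather than $(2M+1)\delta$, so you should take, for example, $\delta<\varepsilon/(2(3M+1))$; with that change the argument is complete.
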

\begin{proof}
Given $\varepsilon > 0$, let $(\xi^{(k)})_{k \in \Z}$ be a $\delta$-pseudo-orbit of $\sigma$ for some $\delta > 0$. Define $\eta \in \Qu$ by $\eta_k = \xi^{(k)}_0$. We will show that $(\xi^{(k)})_{k \in \Z}$ is $\varepsilon$-shadowed by $\eta$. Indeed, observe that
\begin{equation*}
    \rho(\sigma^k(\eta), \xi^{(k)}) = \sum\limits_{i \in \Z} \frac{\abs{\eta_{i+k} - \xi^{(k)}_i}}{2^{\abs{i}}} = \sum\limits_{i \in \Z} \frac{\abs{\xi^{(i + k)}_0 - \xi^{(k)}_i}}{2^{\abs{i}}}.
\end{equation*}
Since $(\xi^{(k)})_{k \in \Z}$ is a $\delta$-pseudo-orbit, $\rho(\sigma(\xi^{(k)}), \xi^{(k + 1)}) \leq \delta$. Hence, in particular,
\begin{equation*}
    \frac{\abs{\xi^{(k)}_{i + 1} - \xi^{(k + 1)}_i}}{2^{\abs{i}}} \leq \delta.
\end{equation*}
That is, $\abs{\xi^{(k)}_{i + 1} - \xi^{(k + 1)}_i} \leq \delta\cdot 2^{\abs{i}}$ for every $i$ and $k$ in $\Z$. Assume, without loss of generality, that $i > 0$. Applying the triangle inequality, we obtain
\begin{equation*}
    \abs{\xi^{(i + k)}_0 - \xi^{(k)}_i} \leq \sum\limits_{j = 0}^{i-1} \abs{\xi^{(k + i - j)}_j - \xi^{(k + i - j - 1)}_{j + 1}} \leq \sum\limits_{j = 0}^{i-1} \delta\cdot 2^j = \delta(2^i - 1).
\end{equation*}
In general, $\abs{\xi^{(i + k)}_0 - \xi^{(k)}_i} < \delta\cdot2^{\abs{i}}$. Furthermore, choose $N \in \N$ such that
\begin{equation*}
    \sum\limits_{\abs{i} > N} \frac{1}{2^{\abs{i}}} < \frac{\varepsilon}{2},
\end{equation*}
and choose $\delta > 0$ such that $\delta < \frac{\varepsilon}{2(2N + 1)}$. Therefore,
\begin{equation*}
    \rho(\sigma^k(\eta), \xi^{(k)}) = \sum\limits_{i \in \Z} \frac{\abs{\xi^{(i + k)}_0 - \xi^{(k)}_i}}{2^{\abs{i}}} \leq \sum\limits_{\abs{i} \leq N} \frac{\abs{\xi^{(i + k)}_0 - \xi^{(k)}_i}}{2^{\abs{i}}}  + \sum\limits_{\abs{i} > N} \frac{1}{2^{\abs{i}}} \leq \left(\sum\limits_{\abs{i} \leq N} \delta\right) + \frac{\varepsilon}{2}.
\end{equation*}
Observe that $\sum_{\abs{i} \leq N} \delta = (2N + 1)\delta$. Hence, by the choice of $\delta$, $\rho(\sigma^k(\eta), \xi^{(k)}) < \varepsilon$ for all $k \in \Z$.
\end{proof}

\begin{remark}
    A computation similar to the one in the preceding proof can be made for the shift map $\sigma: X^\Z \to X^\Z$. Indeed, for any $\varepsilon > 0$, there exists a constant $C > 0$ depending on the metric $d$ on $X$, such that if $(\hat{x}^{(k)})_{k \in \Z}$ is a $(C\delta)$-pseudo-orbit of $\sigma$, with $\delta > 0$ chosen as in the proof above, then the point $\hat{y} = (\ldots, y_{-1}, y_0, y_1, \ldots) \in X^\Z$ defined by $y_k = x^{(k)}_0$ $\varepsilon$-shadows $(\hat{x}^{(k)})_{k \in \Z}$.
\end{remark}

Given $B \subset X$, a sequence $(x_n)_{n \in \mathbb{Z}}$ is said to be \emph{through} $B$ if $x_0 \in B$. A Borel measure $\mu$ has the \emph{POTP} with respect to $f$ if for every $\varepsilon > 0$, there exist $\delta > 0$ and a Borel set $B \subset X$ with $\mu(X \setminus B) = 0$ such that every $\delta$-pseudo-orbit through $B$ can be $\varepsilon$-shadowed. In other words, this is the pseudo-orbit tracing property in measure. Observe that if a homeomorphism $f: X \to X$ has the POTP, then every Borel measure $\mu$ on $X$ also has the POTP.

Moreover, we say that a Borel measure $\mu$ is \emph{topologically stable} with respect to $f$ if for every $\varepsilon > 0$, there exists $\delta > 0$ such that for every homeomorphism $g$ that is $\delta$-close to $f$, there is an upper semi-continuous compact-valued map $H$ on $X$ with a measurable domain $\operatorname{Dom}(H) = \{x \in X : H(x) \neq \varnothing\}$ such that:
\begin{enumerate}
    \item $\mu(X\setminus\operatorname{Dom}(H)) = 0$;
    \item $\mu(H(x)) = 0$ for all $x \in X$;
    \item $H(x) \subset \{y \in X : d(x, y) \leq \varepsilon\}$ for every $x \in X$;
    \item $f\circ H = H\circ g$.
\end{enumerate}

We are now ready to prove that every expansive measure in $\mathcal{D}$, as established in Corollary \ref{coro:dense_set_expansive_measures}, is also topologically stable.

\begin{proof}[Proof of Corollary \emph{\ref{coro:dense_topologically_stable_measures}}]
    Because $\sigma: \Qu \to \Qu$ satisfies the pseudo-orbit tracing property, every Borel measure $\mu \in \mathcal{D}$ satisfies the POTP. Furthermore, since every expansive measure satisfying the POTP is topologically stable by Theorem 3.1 of \cite{lee_morales_2017}, it follows that $\mu$ is topologically stable.
\end{proof}

\subsection{An alternate proof of Theorem \ref{theo:shift-map-measure-expansive}}

In the following result, we extend Lemma 2.7 from \cite{arbieto_morales_2013} to invariant measures with infinite entropy. Although our proof is almost identical to theirs, their argument relies on the fact that the entropy of the invariant measure is positive and finite.

Recall that a Borel probability measure $\mu$ is called \emph{ergodic} if every measurable invariant set of $f$ has measure zero or one. The entropy of $\mu$ with respect to $f$ is defined by
\begin{equation*}
    h_\mu(f) = \sup \left\{-\lim\limits_{n \to \infty} \frac{1}{n}\sum_{P \in \mathcal{P}_{n - 1}} \mu(P)\log(\mu(P)) : \mathcal{P} \text{ is a finite partition of } X\right\},
\end{equation*}
where $\mathcal{P}_n$ is the pullback partition of $\mathcal{P}$ under $f^n$ (see \cite{Viana_Oliveira_2016} for details).

\begin{lemma}\label{lemma:ergodic-invariant-measure-infinite-entropy-is-expansive}
    For any homeomorphism on a compact metric space, every ergodic invariant measure with infinite entropy is expansive.
\end{lemma}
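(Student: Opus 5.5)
The plan is to argue by contradiction, combining the Brin--Katok local entropy formula with a finite-partition estimate in the spirit of Lemma 2.7 of \cite{arbieto_morales_2013}. Suppose $\mu$ is ergodic and invariant with $h_\mu(f)=\infty$ but not expansive. Then for every $\delta>0$ there is $x_\delta$ with $\mu(\Gamma_\delta(x_\delta))>0$. The goal is to show that a set of positive measure which stays $\delta$-close along the whole orbit forces the entropy to be bounded by a quantity depending only on $\delta$ and the geometry of $X$. That bound is finite, which contradicts $h_\mu(f)=\infty$.

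\textbf{Step 1: the points with large dynamical balls.} Since $\Gamma_\delta(x)\subset B(x,n,\delta)$, the Bowen ball of length $n$, for every $n$, we get $\mu(B(x_\delta,n,\delta))\ge c:=\mu(\Gamma_\delta(x_\delta))>0$ for all $n$. For any $y\in\Gamma_\delta(x_\delta)$ we also have $B(x_\delta,n,\delta)\subset B(y,n,2\delta)$. Hence every point of the positive-measure set $E=\Gamma_\delta(x_\delta)$ satisfies
\[
-\tfrac1n\log\mu(B(y,n,2\delta))\le -\tfrac1n\log c\to 0 .
\]

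\textbf{Step 2: compare with entropy via a partition.} Fix any finite partition $\mathcal P$ with $\operatorname{diam}(P)<\delta$ for all $P\in\mathcal P$. Its entropy $h_\mu(f,\mathcal P)$ can be made arbitrarily large, because $h_\mu(f)=\infty$ and refining with small-diameter partitions approximates the supremum. I intend to use the Shannon--McMillan--Breiman theorem, which needs ergodicity and finite partitions, so it applies even though the total entropy is infinite. It says that for $\mu$-a.e.\ $y$,
\[
-\tfrac1n\log\mu(\mathcal P_{n-1}(y))\to h_\mu(f,\mathcal P).
\]
The key inequality to establish is that a Bowen ball $B(y,n,2\delta)$ meets at most $K^n$ elements of $\mathcal P_{n-1}$. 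Here $K$ is the maximum number of atoms of $\mathcal P$ that meet a ball of radius $2\delta$, and $K$ depends only on $\mathcal P$ and $\delta$. Given this, Step 1 yields a positive-measure set on which many atoms of measure about $e^{-nh}$ fill mass $c$, which forces $h_\mu(f,\mathcal P)\le\log K$.

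\textbf{Main obstacle.} The bound $\log K$ depends on $\mathcal P$, and $K$ can grow as $\mathcal P$ is refined, so a naive estimate is circular. This is where the finite-entropy hypothesis of \cite{arbieto_morales_2013} would enter. To remove it, I would first fix one partition $\mathcal P_0$ with diameter less than $\delta$ and boundary of measure zero. I would then choose $\delta$ small relative to the Lebesgue number of $\mathcal P_0$ so that $K=1$ in the limit. The point is that, with $\delta$ less than the Lebesgue number, every $y\in\Gamma_\delta(x_\delta)$ lies in a single atom of $\bigvee_{i}f^{-i}\mathcal P_0$, except for points near the boundary.

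The cleaner route I would try first is this. Take any $\mathcal P$ with $h_\mu(f,\mathcal P)>0$, and apply SMB to the fact that $\Gamma_\delta(x_\delta)$, of measure $c>0$, is contained in at most $K^n$ atoms for every $n$. Then conclude $h_\mu(f,\mathcal P)\le \log K$ for this fixed $\mathcal P$, and use ergodicity together with the invariance of the non-expansiveness over all $\delta$ to push $\delta\to0$. This makes $K$ bounded independently of the choices, so $\sup_{\mathcal P}h_\mu(f,\mathcal P)<\infty$, contradicting infinite entropy. Controlling $K$ uniformly as $\delta\to0$ is the step I expect to be delicate.
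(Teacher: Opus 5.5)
There is a genuine gap, and it sits exactly at the step you flag as delicate. Your count ``$B(y,n,2\delta)$ meets at most $K^n$ atoms of $\mathcal P_{n-1}$'' pays a factor $K$ at \emph{every} time $0\le i\le n-1$, and no choice of $\delta$ makes $K=1$. In a connected space a nontrivial partition has no positive Lebesgue number: if every $2\delta$-ball met only one atom, every atom would be open, hence clopen. So $K(\mathcal P,\delta)\ge 2$ for all $\delta$. The clause ``except for points near the boundary'' also does not describe a negligible set. When $\operatorname{supp}\mu$ is connected (as for $\mathfrak{L}$ on $\Qu$), a $\mu$-typical orbit visits the region near $\partial\mathcal P$ with positive frequency. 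All you can extract is $h_\mu(f,\mathcal P)\le\log K(\mathcal P,\delta)$. Your closing claim, that $K$ is bounded independently of the choices and hence $\sup_{\mathcal P}h_\mu(f,\mathcal P)<\infty$, is false. Indeed, $\lim_{\delta\to0}K(\mathcal P,\delta)$ is the largest number of atom closures through a common point, and this grows under refinement. For partitions of $[0,1]^d$ into small sets it is at least $d+1$ by the Lebesgue covering theorem, so over fine partitions of $\Qu$ it is unbounded. A positive-measure $\Gamma_\delta$ does not bound the entropy by a geometric constant; that is the wrong mechanism.

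The missing idea is to use the ergodic theorem to control \emph{how often} branching occurs. This is the lower-bound half of the Brin--Katok theorem, and it needs the quantifiers in the opposite order to your Step 2, where $\mathcal P$ is chosen after $\delta$. The steps are as follows.

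First fix a finite partition $\mathcal P$ with $\mu(\partial\mathcal P)=0$ and $h:=h_\mu(f,\mathcal P)>0$. Such partitions exist because null-boundary partitions of arbitrarily small diameter exist and their entropies tend to $h_\mu(f)$. Then fix $\epsilon>0$. Then choose $\delta>0$ so small that the open set $U$ of points whose $3\delta$-ball meets two atoms has $\mu(U)<\epsilon$; this is possible because these sets decrease to a subset of $\partial\mathcal P$ as $\delta\to0$.

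Only now take $E=\Gamma_\delta(x_\delta)$ with $\mu(E)=c>0$, and pick $y\in E$ that is Birkhoff-generic for $\mathbf{1}_U$. Every point of $B(y,n,2\delta)\supset E$ has its $\mathcal P$-itinerary forced at each time $i$ with $f^iy\notin U$. So for large $n$ at most $(\#\mathcal P)^{2\epsilon n}$ atoms of $\mathcal P_{n-1}$ meet $E$. On the other hand, Shannon--McMillan--Breiman (in measure) puts mass at least $c/2$ of $E$ into atoms of measure at most $e^{-n(h-\epsilon)}$, so at least $\tfrac c2e^{n(h-\epsilon)}$ atoms meet $E$. Hence $h\le\epsilon(1+2\log\#\mathcal P)$, which is absurd for small $\epsilon$. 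This repaired route is self-contained and needs only $h_\mu(f)>0$.

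The paper avoids the computation by citing Brin--Katok. It shows that $X_\delta=\{x:\mu(\Gamma_\delta(x))=0\}$ satisfies $X_\delta\subset f^{-1}(X_\delta)$, so $\mu(X_\delta)\in\{0,1\}$ by ergodicity. It then uses that the local entropy is infinite $\mu$-a.e.\ to find $m$ with $\mu(X_{1/m})>0$, and concludes with Lemma 1.20 of Morales--Sirvent. Granting Brin--Katok, your Step 1 would also finish directly. Since $f^k(\Gamma_\delta(x))=\Gamma_\delta(f^kx)$, ergodicity spreads the vanishing of the local entropy at scale $2\delta$ from $E$ to $\mu$-a.e.\ point, for every $\delta$, which contradicts Brin--Katok.
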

\begin{proof}
    Given $\mu$ an infinite entropy invariant measure of $f: X \to X$, fix $\delta > 0$ and consider $X_\delta = \{x \in X : \mu(\Gamma_{\delta}(x)) = 0\}$. By Lemma 1.20 in \cite{morales_sirvent_2013}, we only need to show that there is $\delta > 0$ such that $\mu(X_\delta) = 1$.

    Fix $x \in X$ and let $y \in f(\Gamma_{\delta}(x))$. Then there exists $y_0 \in \Gamma_{\delta}(x)$ such that $y = f(y_0)$. In particular, this implies $d(y, f(x)) \leq \delta$. Since $y = f(y_0)$ and $y_0 \in \Gamma_{\delta}(x)$, we have $y \in \Gamma_{\delta}(f(x))$. Hence, $f(\Gamma_{\delta}(x)) \subset \Gamma_{\delta}(f(x))$. In other words,
    \begin{equation*}
        \Gamma_{\delta}(x) \subset f^{-1}(\Gamma_{\delta}(f(x))).
    \end{equation*}
    Since $\mu$ is an invariant measure for $f$, we obtain
    \begin{equation*}
        \mu(\Gamma_{\delta}(x)) \leq \mu(\Gamma_{\delta}(f(x))).
    \end{equation*}
    Consequently, $x$ belongs to $X_\delta$ whenever $f(x) \in X_\delta$. That is, $f(X_\delta) \subset X_\delta$. Equivalently, $X_\delta \subset f^{-1}(X_\delta)$. Note that $\mu(X_\delta) = \mu(f^{-1}(X_\delta))$. Since $X_\delta$ is contained in $f^{-1}(X_\delta)$, we have $\mu(X_\delta \Delta f^{-1}(X_\delta)) = 0$, where $X_\delta \Delta f^{-1}(X_\delta)$ is the symmetric difference between $X_\delta$ and $f^{-1}(X_\delta)$. Since $\mu$ is an ergodic measure, then for every $\delta > 0$, $\mu(X_\delta) \in \{0, 1\}$. Hence, we only need to show that there exists $\delta > 0$ such that $\mu(X_\delta) > 0$.

    For every $\delta > 0$, define $\phi_\delta: X \to \mathbb{R}\cup\{\infty\}$ as
    \begin{equation*}
        \phi_\delta(x) = \liminf\limits_{n \to \infty} - \frac{\log \mu(B[x, n, \delta])}{2n - 1},
    \end{equation*}
    where $B[x, n, \delta] = \{y \in X : d(f^i(x), f^i(y)) \leq \delta, \text{ for } -n + 1 \leq i \leq n - 1\}$. For $m, L \in \mathbb{N}$, set $X^{m, L} = \{x \in X : \phi_{1/m}(x) > L\}$. For a fixed $L$, $X^{m, L} \subset X^{m', L}$, whenever $m \leq m'$. Hence,
    \begin{equation*}
        \{x \in X : \sup\limits_{\delta > 0} \phi_\delta(x) = \infty\} \subset \bigcup_{m \in \mathbb{N}} X^{m, L}.
    \end{equation*}
    Thus,
    \begin{equation*}
        \mu(\{x \in X : \sup\limits_{\delta > 0} \phi_\delta(x) = \infty\}) \leq \lim\limits_{m \to \infty} \mu(X^{m, L}).
    \end{equation*}
    Since $\mu$ is an ergodic invariant measure with infinite entropy, $\mu$ is non-atomic. So, the Brin--Katok theorem \cite{brin_katok_1983} implies $\mu(\{x \in X : \sup_{\delta > 0} \phi_\delta(x) = \infty\}) = 1$. Therefore, for any $L \in \mathbb{N}$, $\lim_{m \to \infty} \mu(X^{m, L}) = 1$. This implies that there exists $m_0 \in \mathbb{N}$ such that for all $m \geq m_0$, $\mu(X^{m, L}) > 0$. For $x \in X^{m, L}$, $\mu(B[x, n, 1/m]) < e^{-L(2n - 1)}$ for $n$ sufficiently large. Since $L > 0$, we get 
    \begin{equation*}
        \lim\limits_{n \to \infty} \mu(B[x, n, 1/m]) = 0.
    \end{equation*}
    Moreover, since $\Gamma_{1/m}(x) = \bigcap\limits_{n = 0}^\infty B[x, n, 1/m]$ and $B[x, n, 1/m] \subset B[x, n', 1/m]$ whenever $n \geq n'$, then
    \begin{equation*}
        \mu(\Gamma_{1/m}(x)) = \lim\limits_{n \to \infty} \mu(B[x, n, 1/m]) = 0.
    \end{equation*}
    Hence, $x \in X_{1/m}$. Therefore, for $\delta = 1/m$, we obtain $X^{m, L} \subset X_\delta$. Which implies $\mu(X_\delta) \geq \mu(X^{m, L}) > 0$.
\end{proof}

\begin{remark}\label{remark:product-lebesgue-measure-infinite-entropy}
    The proof that the entropy of $\mathfrak{L}$ with respect to the shift map $\sigma$ is infinite is quite elementary, so we omit it here. The main idea is to show that, for any $m \geq 2$, the full shift on $m$ symbols $\sigma_m$ is a measurable factor of $\sigma: \Qu \to \Qu$. Indeed, consider the intervals $I_i = [i/m, (i+1)/m)$ for $0 \leq i \leq m - 1$, and define $\psi_m: \Qu \to \{0, \dots, m-1\}^\mathbb{Z}$ by $(\psi_m(\xi))_n = i$ if and only if $\xi_n \in I_i$. Clearly, $\psi_m\circ\sigma = \sigma_m\circ\psi_m$, and $(\psi_m)_*\mathfrak{L}$ is the Bernoulli measure on $\{0, \dots, m-1\}^\mathbb{Z}$. Therefore, for each $m \geq 2$, 
    \begin{equation*}
        h_{\mathfrak{L}}(\sigma) \geq h_{(\psi_m)_*\mathfrak{L}}(\sigma_m) = \log m.
    \end{equation*}
\end{remark}

\begin{lemma}\label{lemma:product-lebesgue-measure-ergodic}
    The product Lebesgue measure $\mathfrak{L}$ is an ergodic measure for the shift $\sigma: \Qu \to \Qu$.
\end{lemma}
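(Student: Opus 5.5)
We will prove ergodicity by showing that $\sigma$ is in fact mixing with respect to $\mathfrak{L}$. Mixing implies ergodicity: if $E$ is measurable and $\sigma^{-1}(E) = E$, then applying mixing to $A = B = E$ gives $\mathfrak{L}(E) = \mathfrak{L}(E \cap \sigma^{-n}E) \to \mathfrak{L}(E)^2$. Hence $\mathfrak{L}(E) \in \{0,1\}$. Invariance of $\mathfrak{L}$ under $\sigma$ is already noted in the introduction, and it is clear on cylinders because the shift only permutes the coordinate factors of the product.

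\textbf{Step 1 (mixing on cylinders).} Call a \emph{cylinder} any set of the form $C = \bigotimes_{i\in\Z} A_i$, where each $A_i \subset [0,1]$ is Borel and $A_i = [0,1]$ for $i$ outside a finite set $F_C \subset \Z$. Then $\sigma^{-n}(C)$ is the cylinder whose constrained coordinates form $F_C + n$ (up to the sign convention of the shift). For two cylinders $C, D$ and $\abs{n}$ large, the sets $F_D$ and $F_C + n$ are disjoint. Since $\mathfrak{L}$ is a product measure, this gives
\[
\mathfrak{L}(D \cap \sigma^{-n}C) = \prod_{i\in F_D}\Leb(B_i)\prod_{i\in F_C}\Leb(A_i) = \mathfrak{L}(D)\,\mathfrak{L}(C),
\]
where $D = \bigotimes B_i$. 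Thus the mixing identity holds exactly, for all large $n$, on cylinders. By additivity it extends to the algebra $\mathcal{A}$ of finite disjoint unions of cylinders.

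\textbf{Step 2 (approximation).} The algebra $\mathcal{A}$ generates the Borel $\sigma$-algebra of $\Qu$, since the product $\sigma$-algebra coincides with the Borel one because $\Qu$ is second countable. So for any measurable $A, B$ and any $\epsilon > 0$ there are $A', B' \in \mathcal{A}$ with $\mathfrak{L}(A \Delta A') < \epsilon$ and $\mathfrak{L}(B \Delta B') < \epsilon$; this is the standard approximation lemma for a generating algebra. By invariance of $\mathfrak{L}$,
\[
\abs{\mathfrak{L}(B\cap\sigma^{-n}A) - \mathfrak{L}(B'\cap\sigma^{-n}A')} \le \mathfrak{L}(B\Delta B') + \mathfrak{L}(\sigma^{-n}(A\Delta A')) < 2\epsilon,
\]
and also $\abs{\mathfrak{L}(A)\mathfrak{L}(B) - \mathfrak{L}(A')\mathfrak{L}(B')} < 2\epsilon$. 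Combining these with Step 1 gives $\limsup_n \abs{\mathfrak{L}(B\cap\sigma^{-n}A) - \mathfrak{L}(A)\mathfrak{L}(B)} \le 4\epsilon$. Since $\epsilon$ is arbitrary, $\mathfrak{L}$ is mixing, and hence ergodic.

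The only step requiring care is the approximation in Step 2, namely that finite unions of cylinders approximate every Borel set in measure. This is standard: the class of sets approximable by $\mathcal{A}$ is a $\sigma$-algebra containing $\mathcal{A}$. Alternatively, one could shortcut the whole argument by invoking Kolmogorov's zero--one law. Any invariant set agrees $\mathfrak{L}$-a.e. with a set in the tail $\sigma$-algebra of the i.i.d. coordinate process $(\xi_i)$, so it has measure $0$ or $1$. The mixing route is more self-contained, so we follow it.
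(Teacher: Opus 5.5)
Your argument is correct and rests on the same two ingredients as the paper's proof: exact independence of a finite-coordinate set $C$ and its translate $\sigma^{-n}C$ once their coordinate windows are disjoint, and approximation of Borel sets by finite-coordinate sets. The only difference is that you run the approximation for an arbitrary pair $A,B$, obtaining mixing before specializing to $A=B=E$, whereas the paper works directly with the invariant set $E$ (using that $\sigma^{-k_n}C_n$ approximates $E$ as well as $C_n$ does); you are also more careful than the paper in approximating by finite disjoint unions of cylinders rather than by single cylinders.
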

\begin{proof}
    Let $E \subset \Qu$ be a Borel set such that $\sigma^{-1}(E) = E$. We must show that $\mathfrak{L}(E) \in \{0, 1\}$. Consider a sequence of cylinder sets $C_n$ such that $\mathfrak{L}(E\Delta C_n) \to 0$. This implies that $\mathfrak{L}(C_n) \to \mathfrak{L}(E)$. Moreover, since each cylinder set $C_n$ is defined by coordinates with indices in $[-n, n]$, we can choose an increasing sequence of integers $(k_n)_{n \in \mathbb{N}}$ such that $k_n > 2n$. Because the coordinates are disjoint, the sets are independent under the product measure $\mathfrak{L}$, and by shift-invariance we obtain, for each $n \in \mathbb{N}$,
    \begin{equation*}
        \mathfrak{L}(C_n \cap \sigma^{-k_n}(C_n)) = \mathfrak{L}(C_n)\mathfrak{L}(\sigma^{-k_n}(C_n)) = \mathfrak{L}(C_n)^2.
    \end{equation*}
    Furthermore, observe that
    \begin{equation*}
        \mathfrak{L}(E \Delta \sigma^{-k_n}(C_n)) = \mathfrak{L}(\sigma^{-k_n}(E) \Delta \sigma^{-k_n}(C_n)) = \mathfrak{L}(E \Delta C_n),
    \end{equation*}
    by the invariance of $E$ and of $\mathfrak{L}$, which implies that $\mathfrak{L}(E\Delta\sigma^{-k_n}(C_n)) \to 0$. Thus, 
    \begin{equation*}
        \mathfrak{L}((C_n \cap \sigma^{-k_n}(C_n)) \Delta E) \to 0.
    \end{equation*}
    This implies that
    \begin{equation*}
        \mathfrak{L}(E) = \lim\limits_{n \to \infty} \mathfrak{L}(C_n \cap \sigma^{-k_n}(C_n)) = \lim\limits_{n \to \infty} \mathfrak{L}(C_n)^2 = \mathfrak{L}(E)^2.
    \end{equation*}
    Since the product Lebesgue measure $\mathfrak{L}$ is a probability measure, this forces $\mathfrak{L}(E) \in \{0, 1\}$, confirming that $\mathfrak{L}$ is an ergodic measure for the shift map.
\end{proof}

\begin{proof}[Alternate proof of Theorem \emph{\ref{theo:shift-map-measure-expansive}}, part 1]
    As a consequence of Remark \ref{remark:product-lebesgue-measure-infinite-entropy} and Lemma \ref{lemma:product-lebesgue-measure-ergodic}, the product Lebesgue measure $\mathfrak{L}$ is an ergodic measure with infinite entropy. Therefore, by Lemma \ref{lemma:ergodic-invariant-measure-infinite-entropy-is-expansive}, we conclude that $\mathfrak{L}$ is an expansive measure for the shift map $\sigma$.
\end{proof}

\subsection{On a generalization of entropy-expansiveness}

In this section, we introduce a generalization of the notion of entropy-expansive maps, originally defined by R. Bowen in \cite{bowen_1972}. This new concept is based on the metric mean dimension developed by E. Lindenstrauss and B. Weiss in \cite{lindenstrauss_weiss_2000}. We will refer to maps satisfying this property as \emph{mmd-expansive}.

Given $\varepsilon > 0$, $n \in \mathbb{N}$, and a closed set $K \subset X$, let $\operatorname{Sep}(f, K, n, \varepsilon)$ denote the maximum number of $\varepsilon$-separated points contained in $K$ with respect to the metric $d_n$, defined for $x, y \in X$ by $d_n(x, y) = \max \{d(f^ix, f^iy) : 0 \leq i \leq n - 1\}$. We define
\begin{equation*}
    \operatorname{Sep}(f, K, \varepsilon) = \limsup\limits_{n \to \infty} \frac{\log \operatorname{Sep}(f, K, n, \varepsilon)}{n},
\end{equation*}
and $h(f, K) = \lim\limits_{\varepsilon \to 0} \operatorname{Sep}(f, K, \varepsilon)$. Finally, the map $f: X \to X$ is said to be \emph{entropy-expansive} if there exists $\delta > 0$ such that $h(f, \Gamma_\delta(x)) = 0$ for every $x \in X$. Moreover, the upper metric mean dimension and the lower metric mean dimension are, respectively, defined as
\begin{equation*}
    \overline{\operatorname{mdim}}_M(f, K, d) = \limsup\limits_{\varepsilon \to 0 } \frac{\operatorname{Sep}(f, K, \varepsilon)}{\abs{\log \varepsilon}} \text{ and } \underline{\operatorname{mdim}}_M(f, K, d) = \liminf\limits_{\varepsilon \to 0 } \frac{\operatorname{Sep}(f, K, \varepsilon)}{\abs{\log \varepsilon}}.
\end{equation*}
Therefore, we say that $f: X \to X$ is \emph{upper mmd-expansive} if there is $\delta > 0$ such that $\overline{\operatorname{mdim}}_M(f, \Gamma_\delta(x), d) = 0$ for all $x \in X$. On the other hand, $f$ is \emph{lower mmd-expansive} if there exists $\delta > 0$ such that $\underline{\operatorname{mdim}}_M(f, \Gamma_\delta(x), d) = 0$ for every $x \in X$. Clearly, if a map is upper mmd-expansive, it is also lower mmd-expansive; therefore, we will simply refer to it as \emph{mmd-expansive}. Furthermore, if a map is entropy-expansive, then it is mmd-expansive.

\begin{question*}
    Is there a map that is lower mmd-expansive but not mmd-expansive?
\end{question*}

\begin{proposition}
    The shift map $\sigma: \emph{\Qu} \to \emph{\Qu}$ is not lower mmd-expansive.
\end{proposition}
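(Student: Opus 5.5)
We need to show that for every $\delta > 0$ there is a point $\xi \in \Qu$ with $\underline{\operatorname{mdim}}_M(\sigma, \Gamma_\delta(\xi), \rho) > 0$. The plan is to use the left inclusion of Lemma \ref{lemma:caracterization-epsilon-close-orbit}, which says that $\Gamma_\delta(\xi)$ contains the product cube $K = \bigotimes_{i \in \Z} ([\xi_i - \delta/3, \xi_i + \delta/3] \cap [0,1])$. Since $\operatorname{Sep}(\sigma, K, n, \varepsilon)$ is monotone in $K$, it suffices to bound $\operatorname{Sep}(\sigma, K, \varepsilon)$ from below for a convenient choice of $\xi$. I would take $\xi$ to be the constant sequence $\xi_i = 1/2$ (with $\delta$ small). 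Then $K = J^\Z$ with $J = [1/2 - \delta/3, 1/2 + \delta/3]$, an interval of length $a = 2\delta/3$, and $K$ is shift-invariant.

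The key step is a lower bound for separated sets. For $\varepsilon$ small, let $M = \lfloor a/\varepsilon \rfloor$ and fix $M+1$ points $t_0 < \dots < t_M$ in $J$ spaced by at least $\varepsilon$. For each word $w \in \{0, \dots, M\}^n$, define $\eta^w \in K$ by $\eta^w_j = t_{w_j}$ for $0 \leq j \leq n-1$ and $\eta^w_j = 1/2$ otherwise. If $w \neq w'$ differ at position $j$, then
\begin{equation*}
d_n(\eta^w, \eta^{w'}) \geq \rho(\sigma^j \eta^w, \sigma^j \eta^{w'}) \geq \abs{\eta^w_j - \eta^{w'}_j} \geq \varepsilon,
\end{equation*}
because the $i=0$ term of $\rho$ has weight $1$. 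Hence $\operatorname{Sep}(\sigma, K, n, \varepsilon') \geq (M+1)^n$ for any $\varepsilon' < \varepsilon$. To avoid fussing with strict versus non-strict separation, I would run the construction with spacing $2\varepsilon$ instead of $\varepsilon$. This gives $\operatorname{Sep}(\sigma, K, \varepsilon) \geq \log \lfloor a/(2\varepsilon) \rfloor$.

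Dividing by $\abs{\log \varepsilon}$ and letting $\varepsilon \to 0$, the ratio $\log(a/(2\varepsilon))/\abs{\log\varepsilon}$ tends to $1$. So the liminf is at least $1$ (in fact infinite would be too much to claim; the bound $\geq 1$ suffices). Therefore $\underline{\operatorname{mdim}}_M(\sigma, \Gamma_\delta(\xi), \rho) \geq 1 > 0$ for every $\delta > 0$, which means no $\delta$ can work and $\sigma$ is not lower mmd-expansive.

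The only step needing care is that the separated-set count is taken for the closed set $\Gamma_\delta(\xi)$ and that monotonicity under inclusion holds. Both are immediate, since $\Gamma_\delta(\xi)$ is closed as an intersection of closed sets, and a separated subset of $K$ is also a separated subset of any larger set. Otherwise the argument is just the standard computation showing that the shift on $[0,1]^\Z$ has metric mean dimension $1$, localized to a small cube. The main (minor) obstacle is bookkeeping the constant $a$ so that the ratio still tends to $1$, and that is harmless because $\log a$ is a fixed constant.
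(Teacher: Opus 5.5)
Your proof is correct and follows the same route as the paper: place a product cube ($[0,\delta/3]^{\Z}$ around the zero sequence in the paper, $J^{\Z}$ around the constant $1/2$ sequence in yours) inside $\Gamma_\delta(\xi)$, then show that this cube has lower metric mean dimension at least $1$. The only difference is in that last step: the paper cites Proposition 3.3 and Theorem 4.2 of Lindenstrauss--Weiss, whereas you prove the bound directly with the explicit $d_n$-separated family $\eta^w$, which makes your argument self-contained and means you never need the shift-invariance of the cube.
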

\begin{proof}
    Let $\xi_*$ denote the point in the Hilbert cube whose coordinates are all zero. That is, $\xi_* = (\ldots, 0; 0, 0, \ldots)$. Given $\varepsilon > 0$, let $\eta \in [0, \varepsilon/3]^\mathbb{Z}$ and $k \in \mathbb{Z}$. We have that $\sigma^k(\eta) = (\ldots, \eta_{k - 1}; \eta_k, \eta_{k + 1}, \ldots)$. Note that $\sigma^k(\xi_*) = \xi_*$ for all $k \in \mathbb{Z}$. Hence,
    \begin{equation*}
        \rho(\sigma^k(\eta), \xi_*) = \sum_{i \in \mathbb{Z}} \frac{\abs{\eta_{k + i}}}{2^{\abs{i}}} \leq \frac{\varepsilon}{3} \sum_{i \in \mathbb{Z}} \frac{1}{2^{\abs{i}}} = 3 \cdot \frac{\varepsilon}{3} = \varepsilon.
    \end{equation*}
    Thus, $\eta \in \Gamma_\varepsilon(\xi_*)$, which means $[0, \varepsilon/3]^\mathbb{Z} \subset \Gamma_\varepsilon(\xi_*)$. Note that $[0, \varepsilon/3]^\mathbb{Z}$ is invariant by the shift map. Thus, by Proposition 3.3 and Theorem 4.2 in \cite{lindenstrauss_weiss_2000} implies that $\underline{\operatorname{mdim}}_M(\sigma, [0, \varepsilon/3]^\Z, \rho) \geq 1$. Since this holds for all sufficiently small $\varepsilon > 0$, the shift map is not lower mmd-expansive.
\end{proof}

\begin{question*}
    Is there an infinite-dimensional subshift of $\sigma: \Qu \to \Qu$ that is mmd-expansive?
\end{question*}


\section*{Acknowledgments}
The author would like to thank Diego Sanhueza for fruitful discussions and valuable exchange of ideas.

\section*{Funding details}
This work was supported by the Conselho Nacional de Desenvolvimento Científico e Tecnológico (CNPq) through a doctoral scholarship.

\printbibliography

\end{document}